\newtheorem*{theorem*}{Theorem}
\theoremstyle{plain}
\newtheorem{proposition}{Proposition}
\title{The approximation property and Lipschitz mappings on Banach spaces}
\author{Pilar Rueda and Enrique A. S\'anchez-P\'erez}
\date{}
\begin{document}
\maketitle

\begin{abstract}
We present an overview to the approximation property, paying especial attention to the recent results relating  the approximation property to ideals of linear operators and Lipschitz ideals.
 We complete the paper with some new results on  approximation of Lipschitz mappings and their relation to linear operator ideals.
\end{abstract}

\section{The approximation property}

The approximation property is a fundamental property in Functional Analysis. Although the widest context where it has been studied are locally convex spaces, it has fruitfully succeeded  on Banach spaces theory. A locally convex space (l.c.s.) $E$ has the {\it approximation property} (AP for short) if  the identity on $E$ can be approximated uniformly on precompact subsets of $E$ by finite rank operators. Denoting $Id_E$ the identity of $E$, $E'$ the topological dual of $E$, $E'\otimes$ the tensor product of $E'$ and $E$ and ${\mathcal L}(E)$ the space of all continuous operators on $E$, the AP can be written as $Id_E\in \overline {E'\otimes E}^{\tau_c}$ in ${\mathcal L}(E)$, where $\tau_c$ is the topology of uniform converge on precompact subsets of $E$.
 In \cite[p. 108]{Sc}
 one can read:
\begin{quote}
But it is not known whether in every l.c.s. $E$ the identity map  can be approximated, uniformly on every precompact set in $E$, by continuous linear maps of finite rank. The question whether or not this is true constitutes the {\it approximation problem.}
\end{quote}
Around 1970 every known Banach  space (even every known l.c.s.) had the  AP. Gr\"othendieck  proved in 1955 that if every Banach space has the AP then every l.c.s. has the  AP. This result made of special interest the  study of the AP for Banach spaces. At that time it was known that every Banach space with a basis had the AP. So, the problem of the basis posed by Banach in 1932 hit the  scene: {\it  Does every separable Banach space have a Schauder basis?}. Karlin proved in 1948 that there exist separable Banach spaces without an unconditional basis, e.g. $C([0,1])$. However, a complete answer to the basis problem was unknown. So, the relation between both problems, the approximation problem and the problem of the basis, was very strong. To make things more interesting, Gr\"othendieck  also proved that if every  Banach space had the  AP then every separable Banach space had a basis. These results spread the interest of the AP on  Banach spaces, that became the natural setting where the theory was mainly developed.  Of course all this was before 1973, when Enflo  solved the Approximation Problem by giving a celebrated example of a separable Banach space without a basis and without the AP.
After Enflo's example, many new ones appeared in the literature. For instance, the interested reader can find details of the following ones  in \cite{Ca}:
 \begin{itemize}
\item Figiel (unpublished) and Davie (1973/75) found an example of a subspace  $S$ of $\ell_p$, $2<p$, without the  AP.
\item Szankowski (1978) of a subspace $S$ of $\ell_p$, $1\leq p<2$, without the (compact) AP.
\item Johnson (1979/80) gave a  Banach space whose subspaces have all the  AP but some of them have no basis.
\item Szankowski (1981) proved that $\mathcal L(\ell_2;\ell_2)$ does not have the AP.
\item Pisier (1983/85) found an infinite dimensional  Banach space such that $X\hat \otimes_\pi X=X\hat\otimes_\varepsilon X$ algebraically  and topologically. Such $X$ does not have the  AP.
\end{itemize}

Although the definition of the AP is estated just for the identity been approximated, it is  well-known that not only the identity can be approximated. If   $E$ is a l.c.s. with topological dual  $E'$ then, the following are equivalent:
 \begin{enumerate}
 \item $Id_E\in \overline {E'\otimes E}^{\tau_c}$ in ${\mathcal L}(E)$ (i.e. $E$ has the  AP)
 \item ${\mathcal L}(E)=\overline {E'\otimes E}^{\tau_c}$.
 \item  For all l.c.s. $F$, $\mathcal L(E;F)=\overline {E'\otimes F}^{\tau_c}$.
\item For all l.c.s. $F$, $\mathcal L(F;E)=\overline {F'\otimes E}^{\tau_c}$.
 \end{enumerate}

Also compact operators can be approximated uniformly, not only on compact sets, but also on bounded sets. This was proved by Grothendieck  in 1955.
Let $E$ be a  Banach space with strong dual $E'$ and let $\tau_b$ be the topology of uniform convergence on bounded sets of $E$.
 \begin{itemize}
 \item $E$ has the AP $\iff$ for every Banach space $F$, $\mathcal K(F;E)=\overline {F'\otimes E}^{\tau_b}$ in $\mathcal L(F;E)$.
 \item $E'$ has the  AP $\iff$ for every Banach space $F$, $\mathcal K(E;F)=\overline {E'\otimes F}^{\tau_b}$ in $\mathcal L(E;F).$
 \end{itemize}
where $\mathcal K(E;F)$ are all compact operators from  $E$ into $F$.

However, it is still an open problem to know that a Banach space has the AP whenever
  $\mathcal K(E;E)=\overline {E'\otimes E}^{\tau_b}$ in $\mathcal L(E;E)$.


\section{The approximation property and ideals of operators}
In the last decade linear operator ideals have hit the study of the approximation property. The main idea is to consider variants of the approximation property when we consider not only bounded or compact operators, but any linear operator belonging to some given operator ideal. In these variants a suitable operator topology related to the ideal is required. In \cite{BeBo}, it is considered an approximation property  where every linear operator $T \in \mathcal L(E,E)$ can be approximated uniformly on compact subsets of $E$ by  operators in $\mathcal I(E,E)$. In \cite{c,DOPS,oja12,r} the operators from the ideal are approximated by finite rank operators. Besides, replacing compact sets  by another class of sets with some kind of compactness related to the ideal $\mathcal I$ has also been considered. The new class of sets is formed by   $\mathcal I$-compact sets. This notion was introduced by Carl and Stephani  in \cite{CaSt} and the related approximation property has been studied in \cite{DOPS, LaTu,LaTu3}. All these ideas have been unified in \cite{BeBo1} where  the concept of ideal topology  has been introduced. Given ${\mathcal I,J}$ two ideals of linear operators,  the unifying approximation property given in \cite{BeBo1} reads as follows: a Banach space $E$ is said to have the $({\mathcal I,J} , \tau)$-approximation property  if $E$-valued operators belonging
to the operator ideal ${\mathcal I}$ can be approximated, with respect to the ideal topology $\tau$, by operators
belonging to the operator ideal ${\mathcal J}$.

Let us recall some concepts related to the tandem approximation property/operator ideals.
Let $\mathcal I$ be an operator ideal, and let $E$ and $F$ be  Banach spaces.

 The space of all finite rank linear operators between two Banach spaces $E$ and $F$ is denoted by ${\mathcal F}(E,F)$.

A subset $ B\subset E$    is {\it relatively $\mathcal I$-compact} if  $B \subseteq S(M)$, where $M$ is a compact subset of some Banach space $G$ and  $S \in \mathcal I(G,E)$.
A linear operator  $T:E\to F$ is {\it $\mathcal I$-compact} if $T(B_E)$ is a relatively $\mathcal I$-compact subset of $F$.
The operator ideal formed by all linear $\mathcal I$-compact operators is denoted by ${\mathcal K}_{\mathcal I}$.

Let $(\mathcal I,\|\cdot\|_{\mathcal I})$ be a Banach  operator ideal.

Following \cite{oja12} a Banach space $E$ has the {\it ${\mathcal I}$-approximation property}
if for every Banach space $F$,  $\overline{F'\otimes E}^{\|\cdot\|_{\mathcal I}}={\mathcal I}(F,E)$.
In \cite{LaTu} the norm $\|\cdot\|_{\mathcal I}$ is replaced by the operator norm $\|\cdot \|$  in ${\mathcal L}$, that is, a Banach space
  $E$ has the {\it ${\mathcal I}$-uniform approximation property}
if for every Banach space $F$,  $\overline{F'\otimes E}^{\|\cdot\|}={\mathcal I}(F,E)$.

 Note that, whenever the ideal of compact operators $\mathcal K$ is considered,  the ${\mathcal K}$-(uniform) approximation property is just the approximation property.

In particular,  $E$ has the ${\mathcal K}_{\mathcal I}$-uniform approximation property if for every Banach space $F$,  $\overline{F'\otimes E}^{\|\cdot\|}={\mathcal K}_{\mathcal I}(F,E)$,
that is, every ${\mathcal I}$-compact operator from $F$ into $E$  can be uniformly approximated by finite rank operators.

It has been proved \cite{LaTu} that a Banach space $E$ has the ${\mathcal K}_{\mathcal I}$-uniform approximation property if, and only if, the identity $Id_E\in \overline{E'\otimes E}^{\tau_{\mathcal I}} $, where $\tau_{\mathcal I}$ is the topology of uniform convergence on relatively ${\mathcal I}$-compact sets.

\section{The approximation property for Lipschitz operators.}

The above point of view of considering ideals of operator to get variants of the approximation property has motivated the study of the approximation property for metric spaces. Since the publication of  \cite{FJ09}, several ideals of Lipschitz mappings have appeared in the last years whose general approach can be found in \cite{AcRuSaYa}. Lipschitz operator ideals are the basis for considering approximation properties on metric spaces in an intrinsic way and not only on Banach spaces associated to them.

Let $X$ be  a pointed metric space (with distinguished point $0$).
A map $T:X\to E$ is {\it Lipschitz} if there is $C>0$ such that $\|T(x)-T(y)\|\leq Cd(x,y)$ for all $x,y\in X$.
The {\it Lipschitz norm} ${\rm Lip}$ is given by the infimum of all $C>0$ as above. The  space of all Lipschitz maps $T$ with $T(0)=0$ is denoted by $Lip_0(X,E)$ and forms a Banach space when endowed with the ${\rm Lip}$  norm.

Let $\chi_A$ be the characteristic function of a set $A$.
 For $x,x^{\prime }\in X$, define $ m_{xx^{\prime }}:=\chi_{\left\{ x\right\} }-\chi_{\left\{ x^{\prime }\right\} }$. We write $ \mathcal{M}(X)$ for the set of all functions $m=\sum\limits_{j=1}^{n}\lambda
_{j}m_{x_{j}x_{j}^{\prime }}$, called {\it molecules}, for any scalars $\lambda_j$ and any $x,x'$ in $X$, endowed with the norm
\begin{equation}
\left\Vert  m\right\Vert _{\mathcal{M}(X)}=\inf \left\{
\sum\limits_{j=1}^{n}\left\vert \lambda _{j}\right\vert
d(x_{j},x_{j}^{\prime }),\ \ m=\sum\limits_{j=1}^{n}\lambda
_{j}m_{x_{j}x_{j}^{\prime }}\right\} ,
\end{equation}%
where the infimum is taken over all finite representations of the molecule $m$.

\bigskip
The {\it Arens Eells space} \AE $\left( X\right) $ is the completion of the normed space $\mathcal{M}(X)$ (see \cite{A.E56}).

\bigskip
It is well-known that the map $\delta_{X} :X\rightarrow $\AE $(X)$ defined by $\delta_{X}(x)=m_{x0}$
isometrically embeds $X$ in \AE $\left( X\right) $ and that each Lipschitz operator $T:X\to E$ with values in a Banach space $E$, factors as
$$
\xymatrix{
X \ar[rr]^{T} \ar@{->}[dr]_{\delta_X} & & E, \\
& \AE(X) \, , \ar[ur]_{T_L} & }
$$
where $T_L$ is the unique continuous linear operator such that $T=T_L\circ \delta_X$.
The correspondence $T\longleftrightarrow T_{L}$ establishes an isomorphism
between the normed spaces $Lip_{0}(X,E)$ and $\mathcal{L}(\AE \left(
X\right) ,E)$.

We will consider  a {\it composition Lipschitz ideal}, i.e.,
$$
\mathcal I_{Lip}=\mathcal I\circ Lip_0
$$
 for some linear operator ideal $\mathcal I$. This equality  means that  $T\in  \mathcal I_{Lip}(X,E)$ if and only if $T=u\circ S$ for some $u\in {\mathcal I}$ and $S\in Lip_0$. In \cite{AcRuSaYa} it is proved that  $T\in  \mathcal I_{Lip}(X,E)=\mathcal I\circ Lip_0(X,E)$ if and only if $T_L \in \mathcal I(\AE(X),X)$.

In \cite{AcRuSaYa} it has been studied the approximation property for metric spaces by means of ideals of Lipschitz operators. Some of the definitions involved are natural extensions to the Lipschitz setting of the corresponding linear concepts. However, most of the proofs concerning the approximation property strongly  use linear tools and cannot be adapted to the Lipschitz case. That produces several technical difficulties on the new metric approximation theory.

Let $Y$ and $X$ be pointed metric spaces and let ${\mathcal I}$ be a linear operator ideal.
A set $K \subseteq X$ is  {\it (relatively) $\mathcal I$-Lipschitz compact} if $\delta_X(K)$ is (resp. relatively) $\mathcal I$-compact in $\AE (X)$.
If $ E$ is a Banach space, every relatively $\mathcal I$-Lipschitz compact subset of a Banach space is relatively $\mathcal I$-compact (\cite[Proposition 4.2]{AcRuSaYa}).

 We say that a Lipschitz operator $\phi:Y \to X$ is   {\it $\mathcal I$-Lipschitz compact} if $\phi(B_Y) =\phi(\{x \in Y: d(x,0) \le 1 \})$ is  relatively $\mathcal I$-Lipschitz compact.
It is known that a  linear map $T:F \to E$ between Banach spaces  is $\mathcal I$-Lipschitz compact if, and only if, it is linear $\mathcal I$-compact.

\section{The $\mathcal I$-approximation property for Lipschitz operators.}

The standard approximation property for the free spaces $\AE(X)$ has been studied by several author (e.g. \cite{dal,dal2, GoOz}). In \cite{AcRuSaYa} an approximation property has been introduced explicitly for metric spaces: the  $\mathcal I$-approximation property.

Consider  a linear operator ideal $\mathcal I$ and let  ${\mathcal I}_{Lip}={\mathcal I}\circ Lip_0$  be the associated composition Lipschitz operator ideal.

On $Lip_0(X,E)$, we consider the topology {\it Lipschitz-$\tau_{\mathcal I}$} of uniform convergence on $\mathcal I$-Lipschitz compact sets
 in the space of operators $Lip_0(X,E)$ as the one generated by the seminorms
$$
q_K(T):= \sup_{x \in K} \|T(x)\| = \sup_{m \in \delta_X(K)} \|T_L(m)\|,
$$
where $K$ is a relatively $\mathcal I$-Lipschitz compact set of $X$.

It is easy to see that this topology induces on the space $\mathcal L(F,E)$, of linear operators between Banach spaces $F$ and $E$, the topology $\tau_{\mathcal I}$ of uniform convergence on $\mathcal I$-compact sets.

 Consider a set of operators $\mathcal O(X,\AE(X)) \subseteq Lip_0(X, \AE(X))$.

\bigskip
A metric space $X$ has the {\it $\mathcal I$-Lipschitz approximation property with respect to $\mathcal O(X,\AE(X))$} if $\delta_X:X \to \AE(X)$ belongs to the Lipschitz-$\tau_{\mathcal I}$-closure of  $\mathcal O(X,\AE(X))$.

It has been proved in \cite{AcRuSaYa} that  the new concepts and results in  the Lipschitz setting recover those that come from the linear theory related to the ${\mathcal K}_{\mathcal I}$-uniform approximation property. Several connections are obtained when we consider the $\mathcal I$-Lipschitz approximation property with respect to concrete sets  $\mathcal O(X,\AE(X))$.  For instance,  the approximation property on a Banach space $E$ is recovered whenever $\mathcal O(E,\AE(E))={\mathcal F}(E,\AE(E))$ as shows the next result.

Let $E$ be a Banach space. The linearization $\beta_E:\AE(E)\to E$ of the identity map in $E$ is known as the barycentric map. It is well-known that $\beta_E(B_{\AE(E)})=B_E$.

\begin{proposition}
Let $E$ be a Banach space. If $E$ has the ${\mathcal K}$-Lipschitz approximation property with respect to ${\mathcal F}(E,\AE(E))$ then $E$ has the approximation property.
\end{proposition}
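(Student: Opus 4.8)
The plan is to transport a Lipschitz approximation of $\delta_E$ by $\AE(E)$-valued finite rank operators down to $E$ through the barycentric map $\beta_E$, exploiting that $\beta_E\circ\delta_E=Id_E$ and that $\|\beta_E\|\le 1$ (which follows from $\beta_E(B_{\AE(E)})=B_E$). The guiding estimate is that, for any linear $R:E\to\AE(E)$, the composition $\beta_E\circ R$ is again a finite rank linear operator on $E$ whenever $R$ is, and for every $x\in E$ one has $\|x-\beta_E(R(x))\|=\|\beta_E(\delta_E(x)-R(x))\|\le\|\delta_E(x)-R(x)\|$.

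First I would fix a compact set $\mathcal{C}\subseteq E$ and $\varepsilon>0$, with the goal of producing a finite rank operator $T\in\mathcal F(E,E)$ satisfying $\sup_{x\in\mathcal{C}}\|x-T(x)\|<\varepsilon$; establishing this for every such $\mathcal{C}$ and $\varepsilon$ is exactly the assertion that $E$ has the approximation property. The first point to check is that $\mathcal{C}$ is relatively $\mathcal K$-Lipschitz compact: since $\delta_E$ is an isometric embedding, $\delta_E(\mathcal{C})$ is a compact, hence relatively compact, subset of $\AE(E)$, so $\mathcal{C}$ qualifies as a relatively $\mathcal K$-Lipschitz compact set and may legitimately be used as one of the test sets $K$ defining the seminorms $q_K$.

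Next I would invoke the hypothesis. Because $E$ has the $\mathcal K$-Lipschitz approximation property with respect to $\mathcal F(E,\AE(E))$, the map $\delta_E$ lies in the Lipschitz-$\tau_{\mathcal K}$-closure of $\mathcal F(E,\AE(E))$; applied to the basic neighbourhood determined by $K=\mathcal{C}$ and $\varepsilon$, this yields a finite rank linear operator $R:E\to\AE(E)$ with $q_{\mathcal{C}}(\delta_E-R)=\sup_{x\in\mathcal{C}}\|\delta_E(x)-R(x)\|<\varepsilon$. Setting $T:=\beta_E\circ R$, which is finite rank because $R$ is, the estimate recorded above gives $\sup_{x\in\mathcal{C}}\|x-T(x)\|<\varepsilon$, which completes the argument.

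The proof is essentially a diagram chase, so I expect no deep obstacle; the only steps requiring care are the two transport facts, namely that the isometry $\delta_E$ sends compact subsets of $E$ to relatively compact subsets of $\AE(E)$ (so that ordinary compact sets are admissible test sets for the $\mathcal K$-Lipschitz topology), and that post-composition with the bounded operator $\beta_E$ preserves finite rank while not increasing the relevant supremum. A minor point to keep consistent is the normalisation: one uses $\|\beta_E\|\le 1$ so that the same $\varepsilon$ controls the error back on $E$, though one could instead replace $\varepsilon$ by $\varepsilon/\|\beta_E\|$ to absorb any bound on $\beta_E$.
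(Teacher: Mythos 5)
Your proposal is correct and follows essentially the same route as the paper: use the hypothesis to approximate $\delta_E$ on the given compact set by a finite rank operator into $\AE(E)$, then compose with the barycentric map $\beta_E$ to land back in ${\mathcal F}(E,E)$. The only difference is that you spell out two details the paper leaves implicit — that compact subsets of $E$ are admissible test sets for the Lipschitz-$\tau_{\mathcal K}$ topology, and that $\|\beta_E\|\le 1$ lets the same $\varepsilon$ control the error — which is a harmless (indeed welcome) elaboration.
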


\begin{proof}
Fix $\varepsilon>0$ and a compact set $K\subset E$. By assumption, there is a finite rank operator $T\in {\mathcal F}(E,\AE(E))$ such that $\|\delta_E(x)-T(x)\|_{\AE(E)}<\varepsilon$ for all $x\in K$. Using the continuity of the barycentric map we get $\|x-\beta_E\circ T(x)\|_E<E$ for all $x\in K$. Since $\beta_E\circ T\in {\mathcal F}(E,E)$, we conclude that $E$ has the approximation property.
\end{proof}

Let $E$ be a Banach space and let ${\mathcal I}$ be an ideal of linear operators. A subset $ B\subset E$    is {\it  $\mathcal I$-bounded} if  $B \subseteq S(B_G)$, where $G$ is a  Banach space  and  $S \in \mathcal I(G,E)$. Any relatively $\mathcal I$-compact set is $\mathcal I$-bounded. If $\mathcal I=\mathcal L$ is the ideal of all continuous bounded operators, then $\mathcal I$-bounded sets are just all bounded sets.
Note that if in the definition of ${\mathcal I}$-Lipschitz approximation property we change the  topology $\tau_{\mathcal I}$ with the topology of uniform convergence on ${\mathcal I}$-bounded sets then we get the following general result in a similar way:

\begin{proposition}
Let $E$ be a Banach space and let ${\mathcal I}$ be a linear ideal. If $E$ has the ${\mathcal L}$-Lipschitz approximation property with respect to ${\mathcal I}(E,\AE(E))$ then $E$ has the ${\mathcal I}$-uniform approximation property.
\end{proposition}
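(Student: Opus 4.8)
The plan is to imitate the proof of the previous proposition, using the barycentric map to transport an approximation of $\delta_E$ into an approximation of $Id_E$, and then to pass from the identity to an arbitrary operator of the ideal by composition. Unwinding the definitions, the hypothesis says that $\delta_E$ lies in the closure of $\mathcal I(E,\AE(E))$ for the topology of uniform convergence on bounded (i.e. $\mathcal L$-bounded) subsets of $E$: for every bounded $B\subseteq E$ and every $\varepsilon>0$ there is $S\in\mathcal I(E,\AE(E))$ with $\sup_{x\in B}\|\delta_E(x)-S(x)\|_{\AE(E)}<\varepsilon$. The goal is to show that for every Banach space $F$ one has $\mathcal I(F,E)=\overline{F'\otimes E}^{\|\cdot\|}$, that is, that every $u\in\mathcal I(F,E)$ is an operator-norm limit of finite rank operators.

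First I would reduce to an approximation of $u$ on $B_F$. Given $u\in\mathcal I(F,E)$, the set $u(B_F)$ is $\mathcal I$-bounded, hence bounded, so I feed $B=u(B_F)$ into the hypothesis and obtain $S\in\mathcal I(E,\AE(E))$ with $\sup_{x\in B_F}\|\delta_E(u(x))-S(u(x))\|<\varepsilon$. Composing on the left with the barycentric map $\beta_E$, which has norm one and satisfies $\beta_E\circ\delta_E=Id_E$, yields $\|u(x)-\beta_E\circ S\circ u(x)\|\le\|\delta_E(u(x))-S(u(x))\|<\varepsilon$ for all $x\in B_F$, i.e. $\|u-\beta_E\circ S\circ u\|\le\varepsilon$. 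This is exactly the mechanism of the previous proposition, now measured in the operator norm because the approximation is uniform on the bounded set $u(B_F)$, and it exhibits $u$ as an operator-norm limit of the operators $\beta_E\circ S\circ u$.

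The main obstacle is that $\beta_E\circ S\circ u$ lies in $\mathcal I(F,E)$ but need not be finite rank, whereas the $\mathcal I$-uniform approximation property requires approximants in $F'\otimes E$; composing with $\beta_E$ preserves membership in $\mathcal I$ but not finite rank. The clean way to organise the argument is to isolate the identity statement that it really needs: namely that $Id_E$ lies in the closure of $E'\otimes E$ for the topology of uniform convergence on $\mathcal I$-bounded sets (the analogue, with $\mathcal I$-bounded in place of $\mathcal I$-compact sets, of the characterization quoted for the $\mathcal K_{\mathcal I}$-uniform approximation property). Once this is available, finite rank operators $R_\alpha$ with $R_\alpha\to Id_E$ uniformly on the $\mathcal I$-bounded set $u(B_F)$ give finite rank $R_\alpha\circ u\to u$ in operator norm, and the $\mathcal I$-uniform approximation property follows for every $F$; the passage from the bounded to the $\mathcal I$-bounded topology is harmless here since $\mathcal I$-bounded sets are bounded. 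The heart of the matter is therefore converting the hypothesis into this finite rank identity statement, i.e. arranging that the maps approximating $\delta_E$ (and hence their barycentric images $\beta_E\circ S$) can be taken in $E'\otimes E$ rather than merely in $\mathcal I(E,\AE(E))$. This is where I expect the genuine work to lie, the composition argument above being routine once the finite rank control is secured.
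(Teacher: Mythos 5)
Your first two paragraphs are precisely the argument the paper intends: it gives no separate proof of this proposition, only the remark that it follows ``in a similar way'' to the preceding one, i.e.\ by approximating $\delta_E$ uniformly on a bounded set and composing with the barycentric map $\beta_E$; your refinement of feeding $B=u(B_F)$ into the hypothesis and precomposing with $u$, so that the estimate becomes the operator-norm bound $\|u-\beta_E\circ S\circ u\|\le\varepsilon$, is the correct way to make ``similar way'' precise for an arbitrary domain $F$. The obstacle you then isolate is, however, genuine, and your proposal does not close it: the hypothesis only supplies approximants $S\in\mathcal I(E,\AE(E))$, so $\beta_E\circ S\circ u$ belongs to $\mathcal I(F,E)$ but has no reason to be finite rank, while the paper's stated definition of the $\mathcal I$-uniform approximation property, $\overline{F'\otimes E}^{\|\cdot\|}=\mathcal I(F,E)$, demands approximants in $F'\otimes E$. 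The repair you sketch --- upgrading the approximants of $\delta_E$ to elements of $E'\otimes\AE(E)$ --- cannot be extracted from the stated hypothesis by any composition trick, since nothing there controls the rank of $S$. Worse, the literal statement is self-defeating: for $\varepsilon<1$ the operator $\beta_E\circ S$ satisfies $\|Id_E-\beta_E\circ S\|<1$, hence is invertible, so the ideal property forces $Id_E\in\mathcal I(E,E)$ and $\mathcal I(F,E)=\mathcal L(F,E)$, making the finite-rank conclusion equivalent to $\dim E<\infty$.

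What your composition argument does prove --- and what the proposition is evidently meant to assert --- is the statement in which the approximating class of the conclusion matches that of the hypothesis: $Id_E$ (equivalently, each $u\in\mathcal I(F,E)$ after precomposition) lies in the operator-norm closure of $\mathcal I(E,E)$. Alternatively, if one reads the hypothesis with finite rank approximants $\mathcal F(E,\AE(E))$ and the topology of uniform convergence on $\mathcal I$-bounded sets, your scheme closes completely: $u(B_F)$ is $\mathcal I$-bounded, the approximant $T$ is finite rank, and $\beta_E\circ T\circ u\in F'\otimes E$ gives $\|u-\beta_E\circ T\circ u\|\le\varepsilon$, which is the $\mathcal I$-uniform approximation property. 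Under either reading your first two paragraphs constitute the entire proof and coincide with the paper's; measured against the paper's literal definitions, the gap you flag in your last paragraph is real, is not closed by your proposal, and is not addressed by the paper either.
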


The set $\mathcal O_0(X,\AE(X))=  Lip_{0\mathcal F}(X,\AE(X))$, allows to get the approximation of vector-valued Lipschitz mapping by means of finite rank Lipschitz mappings.

\begin{proposition}\cite[Proposition 4.8]{AcRuSaYa}
Let $X$ be a pointed metric  space. The following assertions are equivalent:
\begin{itemize}
\item $X$ has the ${\mathcal I}$-Lipschitz approximation property with respect to $ Lip_{0{\mathcal F}}(X,\AE(X))$.
\item For every Banach space $E$, $Lip_{0{\mathcal F}}(X,E)$ is Lipschitz-$\tau_{\mathcal I}$ dense in $Lip_0(X,E)$.
\end{itemize}
\end{proposition}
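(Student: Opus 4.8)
The plan is to exploit the factorization of every Lipschitz map through the Arens--Eells space, together with the elementary fact that composing a finite rank linear operator on the left preserves finite rank. The implication from the second assertion to the first is immediate: since $\delta_X \in Lip_0(X,\AE(X))$, specializing the density statement to $E = \AE(X)$ and $S = \delta_X$ says exactly that $\delta_X$ lies in the Lipschitz-$\tau_{\mathcal I}$ closure of $Lip_{0{\mathcal F}}(X,\AE(X))$, which is the $\mathcal I$-Lipschitz approximation property with respect to $Lip_{0{\mathcal F}}(X,\AE(X))$.

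For the converse, assume the first assertion and fix a Banach space $E$, a map $S \in Lip_0(X,E)$, a relatively $\mathcal I$-Lipschitz compact set $K \subseteq X$, and $\varepsilon > 0$. First I would invoke the universal factorization $S = S_L \circ \delta_X$ with $S_L \in \mathcal L(\AE(X),E)$. By the hypothesis applied to the seminorm $q_K$, there is $T \in Lip_{0{\mathcal F}}(X,\AE(X))$ with
$$
\sup_{x \in K} \|\delta_X(x) - T(x)\|_{\AE(X)} < \frac{\varepsilon}{1 + \|S_L\|}.
$$
Then I would set $R := S_L \circ T$ and check that $R \in Lip_{0{\mathcal F}}(X,E)$: linearizing yields $R_L = S_L \circ T_L$, and because $T \in Lip_{0{\mathcal F}}(X,\AE(X))$ forces $T_L \in \mathcal F(\AE(X),\AE(X))$, the operator $S_L \circ T_L$ is again of finite rank, so $R$ is indeed a finite rank Lipschitz map. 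Finally, for each $x \in K$,
$$
\|S(x) - R(x)\|_E = \|S_L(\delta_X(x) - T(x))\|_E \le \|S_L\|\,\|\delta_X(x) - T(x)\|_{\AE(X)} < \varepsilon,
$$
so $q_K(S - R) < \varepsilon$. As $K$ and $\varepsilon$ are arbitrary, this places $S$ in the Lipschitz-$\tau_{\mathcal I}$ closure of $Lip_{0{\mathcal F}}(X,E)$.

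I do not expect a genuine obstacle; the only delicate point is the bookkeeping that left composition by the bounded operator $S_L$ maps $Lip_{0{\mathcal F}}(X,\AE(X))$ into $Lip_{0{\mathcal F}}(X,E)$. This rests on the characterization recalled from \cite{AcRuSaYa}, namely that membership in the composition ideal is detected on the linearization, so that finite rank is stable under composition with a continuous linear map. The continuity of $S_L$ is then exactly what transfers the uniform approximation on relatively $\mathcal I$-Lipschitz compact sets from $\AE(X)$ over to $E$ with no loss in the estimates.
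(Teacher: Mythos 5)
Your proof is correct and takes the standard route for this statement (which the paper itself only cites as \cite[Proposition 4.8]{AcRuSaYa} without reproducing a proof): the second assertion yields the first by specializing to $E$ equal to the Arens--Eells space of $X$ and $S=\delta_X$, and the converse follows by factoring $S=S_L\circ\delta_X$ and left-composing the approximants of $\delta_X$ with the bounded linear operator $S_L$, which keeps them in $Lip_{0\mathcal F}(X,E)$ and costs only the factor $\|S_L\|$ in the estimate. There is no gap; the one bookkeeping point you could add is that a basic Lipschitz-$\tau_{\mathcal I}$ neighbourhood is determined by finitely many seminorms $q_{K_1},\dots,q_{K_n}$, which is handled because a finite union of relatively $\mathcal I$-Lipschitz compact sets is again of that type.
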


 It also permits to obtain the approximation of ${\mathcal I}$-Lipschitz compact operators  by finite finite rank Lipschitz mappings as shows the following proposition. However the converse is still an open problem.

\begin{proposition}\cite[Proposition 4.9]{AcRuSaYa}\label{aaaa}
Let $X$ be a pointed metric space and $\mathcal I$ be an operator ideal. If $X$ has the $\mathcal I$-Lipschitz approximation property with respect to $Lip_{0{\mathcal F}} (X,\AE(X))$ then, for any pointed metric space $Z$ and any $\mathcal I$-Lipschitz compact mapping $\phi:Z \to X$, the mapping  $\delta_X \circ \phi$ can be
approximated by finite rank operators of
 $Lip_{0{\mathcal F}} (Z,\AE(X))$ uniformly on $B_Z$.
\end{proposition}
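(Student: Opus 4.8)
The plan is to transport the approximation of $\delta_X$ guaranteed on $X$ back to $Z$ by precomposing with $\phi$. The crucial point is that the hypothesis that $\phi$ be $\mathcal I$-Lipschitz compact is exactly what turns $\phi(B_Z)$ into an admissible test set for the seminorms $q_K$ defining the Lipschitz-$\tau_{\mathcal I}$ topology.

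First I would fix $\varepsilon>0$ and put $K:=\phi(B_Z)\subseteq X$. Since $\phi$ is $\mathcal I$-Lipschitz compact, $\phi(B_Z)$ is relatively $\mathcal I$-Lipschitz compact, that is, $\delta_X(K)$ is relatively $\mathcal I$-compact in $\AE(X)$; thus $K$ is one of the sets for which $q_K$ is defined. The hypothesis that $X$ has the $\mathcal I$-Lipschitz approximation property with respect to $Lip_{0\mathcal F}(X,\AE(X))$ means precisely that $\delta_X$ lies in the Lipschitz-$\tau_{\mathcal I}$-closure of $Lip_{0\mathcal F}(X,\AE(X))$. Testing this against the single seminorm $q_K$ produces a finite rank Lipschitz map $T\in Lip_{0\mathcal F}(X,\AE(X))$ with
$$
q_K(\delta_X-T)=\sup_{x\in K}\|\delta_X(x)-T(x)\|_{\AE(X)}<\varepsilon .
$$

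Next I would set $R:=T\circ\phi$ and verify that $R\in Lip_{0\mathcal F}(Z,\AE(X))$. Writing $T(x)=\sum_{i=1}^{n}f_i(x)\,e_i$ with scalar maps $f_i\in Lip_0(X)$ and vectors $e_i\in\AE(X)$, we obtain $R(z)=\sum_{i=1}^{n}(f_i\circ\phi)(z)\,e_i$; each $f_i\circ\phi$ is a scalar Lipschitz map vanishing at $0$, so $R$ is again finite rank Lipschitz. Finally, since $\phi(z)\in K$ for every $z\in B_Z$,
$$
\sup_{z\in B_Z}\|\delta_X(\phi(z))-R(z)\|_{\AE(X)}=\sup_{z\in B_Z}\|(\delta_X-T)(\phi(z))\|_{\AE(X)}\le q_K(\delta_X-T)<\varepsilon ,
$$
which is exactly the required uniform approximation of $\delta_X\circ\phi$ on $B_Z$ by elements of $Lip_{0\mathcal F}(Z,\AE(X))$.

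I do not expect a genuine obstacle here; the argument is a direct ``push forward'' and the only care needed is in the bookkeeping, namely confirming that relative $\mathcal I$-Lipschitz compactness of $\phi(B_Z)$ is precisely the admissibility condition allowing the hypothesis to be invoked for $q_K$, and that precomposition with the Lipschitz map $\phi$ both preserves the finite-rank structure and does not enlarge the relevant supremum. This asymmetry also suggests why the converse is harder: to go back one would have to realise an arbitrary relatively $\mathcal I$-Lipschitz compact subset of $X$ as some $\phi(B_Z)$ and then descend the approximant from $Lip_{0\mathcal F}(Z,\AE(X))$ to $Lip_{0\mathcal F}(X,\AE(X))$, for which there is no canonical choice.
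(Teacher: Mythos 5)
Your proof is correct and is, in substance, the intended argument: the paper itself only states this result with a citation to \cite[Proposition 4.9]{AcRuSaYa} and gives no proof, and the proof there is exactly your push-forward — take $K=\phi(B_Z)$, which is an admissible set for the seminorm $q_K$ precisely because $\phi$ is $\mathcal I$-Lipschitz compact, pick a finite-rank Lipschitz $T$ with $q_K(\delta_X-T)<\varepsilon$, and precompose with $\phi$. Your verification that $T\circ\phi$ stays in $Lip_{0\mathcal F}(Z,\AE(X))$ (via the representation $T=\sum_i f_i\, e_i$ and $\phi(0)=0$) is the only bookkeeping point, and you have handled it; there is no gap.
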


When we consider the set $\mathcal O_1(E,\AE(E))=\delta_E \circ Lip_{0\mathcal F}(E, \AE(E))$ for a Banach space $E$, then  the $\mathcal I$-Lipschitz approximation property is weaker than the ${\mathcal K}_{\mathcal I}$-uniform approximation property.

\begin{proposition}\cite[Proposition 4.10]{AcRuSaYa}\label{bbbb}
Let $\mathcal I$ be an operator ideal.
Let $E$ be a Banach space with the ${\mathcal K}_{\mathcal I}$-uniform approximation property. Then $E$  has the $\mathcal I$-Lipschitz approximation property as a metric space with respect to the set $\,\,\,\delta_E \circ Lip_{0\mathcal F}(E,\AE(E))$.
\end{proposition}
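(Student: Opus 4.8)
The plan is to unwind the definition of the $\mathcal I$-Lipschitz approximation property with respect to the set $\mathcal O_1(E,\AE(E))=\delta_E\circ Lip_{0\mathcal F}(E,\AE(E))$ and to reduce it, through the isometry $\delta_E$, to the linear characterization of the $\mathcal K_{\mathcal I}$-uniform approximation property recorded earlier. By definition I must show that $\delta_E$ belongs to the Lipschitz-$\tau_{\mathcal I}$-closure of $\mathcal O_1(E,\AE(E))$; concretely, given $\varepsilon>0$ and a relatively $\mathcal I$-Lipschitz compact set $K\subseteq E$, I must exhibit an element $g=\delta_E\circ\psi$ of $\mathcal O_1(E,\AE(E))$, with $\psi$ a finite rank Lipschitz self-map of $E$, satisfying $q_K(\delta_E-g)<\varepsilon$.

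The first step is to rewrite the controlling seminorm using that $\delta_E$ is an isometric embedding. For any self-map $\psi$ of $E$ one has $\delta_E(x)-\delta_E(\psi(x))=m_{x\,\psi(x)}$, so that $\|\delta_E(x)-\delta_E(\psi(x))\|_{\AE(E)}=\|x-\psi(x)\|_E$; hence $q_K(\delta_E-\delta_E\circ\psi)=\sup_{x\in K}\|x-\psi(x)\|_E$. This turns the Lipschitz approximation problem into a purely linear one: approximate the identity uniformly on $K$ by finite rank maps.

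The second step transfers the compactness assumption into $E$. Since $K$ is relatively $\mathcal I$-Lipschitz compact, $\delta_E(K)$ is relatively $\mathcal I$-compact in $\AE(E)$, and by \cite[Proposition 4.2]{AcRuSaYa} the set $K$ is relatively $\mathcal I$-compact in the Banach space $E$. Now the hypothesis that $E$ has the $\mathcal K_{\mathcal I}$-uniform approximation property, in the form $Id_E\in\overline{E'\otimes E}^{\tau_{\mathcal I}}$ given by \cite{LaTu}, yields a finite rank operator $R\in E'\otimes E$ with $\sup_{x\in K}\|x-R(x)\|_E<\varepsilon$. Since a linear finite rank operator is in particular a finite rank Lipschitz map (its linearization maps $\AE(E)$ into the finite dimensional range of $R$), the map $\delta_E\circ R$ lies in $\mathcal O_1(E,\AE(E))$ and satisfies $q_K(\delta_E-\delta_E\circ R)<\varepsilon$, which completes the argument.

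The isometry computation and the remark that linear finite rank operators belong to $Lip_{0\mathcal F}$ are routine; the step carrying the genuine content is the passage from relatively $\mathcal I$-Lipschitz compactness of $K$ to relatively $\mathcal I$-compactness in $E$, supplied by \cite[Proposition 4.2]{AcRuSaYa}. This is the crux, because the linear characterization of the $\mathcal K_{\mathcal I}$-uniform approximation property quantifies over relatively $\mathcal I$-compact subsets of $E$, not over $\mathcal I$-Lipschitz compact sets, and without that transfer the hypothesis could not be invoked. I anticipate no further obstacle.
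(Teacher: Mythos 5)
Your proof is correct and is essentially the standard argument for this statement (which the paper quotes from \cite{AcRuSaYa} without reproducing its proof): reduce via the isometry $\delta_E$ to approximating $Id_E$ uniformly on $K$, transfer relative $\mathcal I$-Lipschitz compactness of $K$ to relative $\mathcal I$-compactness by \cite[Proposition 4.2]{AcRuSaYa}, and invoke the characterization $Id_E\in\overline{E'\otimes E}^{\tau_{\mathcal I}}$ of the ${\mathcal K}_{\mathcal I}$-uniform approximation property from \cite{LaTu}. All the ingredients you use are exactly those recorded in the paper, so no gap remains.
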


The following result completes Proposition \ref{aaaa} and gives a partial converse to Proposition \ref{bbbb}.

\begin{proposition}
Let $E$ be a Banach space and $\mathcal I$ be an operator ideal. If $E$ has the $\mathcal I$-Lipschitz approximation property with respect to $Lip_{0{\mathcal F}} (E,\AE(E))$ then,  any $\mathcal I$-Lipschitz compact mapping $\phi:Z \to E$, defined on any  pointed metric space $Z$,  can be
approximated by finite rank operators in
 $\beta_E\circ Lip_{0{\mathcal F}} (Z,E)$ uniformly on $B_Z$.
\end{proposition}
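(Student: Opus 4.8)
The plan is to obtain the statement as an immediate consequence of Proposition \ref{aaaa}, composed on the left with the barycentric map $\beta_E$. Since the Banach space $E$ is in particular a pointed metric space, I would first apply Proposition \ref{aaaa} with $X=E$: the running hypothesis that $E$ has the $\mathcal I$-Lipschitz approximation property with respect to $Lip_{0{\mathcal F}}(E,\AE(E))$ is precisely the hypothesis demanded there. Hence, for the given $\mathcal I$-Lipschitz compact mapping $\phi:Z\to E$, the composition $\delta_E\circ\phi$ can be approximated, uniformly on $B_Z$, by finite rank Lipschitz mappings belonging to $Lip_{0{\mathcal F}}(Z,\AE(E))$. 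Explicitly: given $\varepsilon>0$ there is $\psi\in Lip_{0{\mathcal F}}(Z,\AE(E))$ such that $\|\delta_E(\phi(z))-\psi(z)\|<\varepsilon$ in \AE$(E)$ for every $z\in B_Z$.

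The second step is to carry this estimate from \AE$(E)$ back to $E$ through the barycentric map. Recall that $\beta_E:\AE(E)\to E$ is linear and bounded, and that $\beta_E(B_{\AE(E)})=B_E$ gives $\|\beta_E\|=1$. Applying $\beta_E$ to the previous inequality and using its linearity and boundedness, I get $\|\beta_E(\delta_E(\phi(z)))-\beta_E(\psi(z))\|<\varepsilon$ for all $z\in B_Z$. The decisive identity is that $\beta_E$ linearizes the identity of $E$, i.e.\ $\beta_E\circ\delta_E=Id_E$; therefore $\beta_E\circ\delta_E\circ\phi=\phi$, and the estimate reads $\|\phi(z)-(\beta_E\circ\psi)(z)\|<\varepsilon$ for all $z\in B_Z$. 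This is exactly the sought uniform approximation of $\phi$ on $B_Z$ by the mapping $\beta_E\circ\psi$.

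It then remains to verify that $\beta_E\circ\psi$ is an admissible approximant, i.e.\ that it lies in $\beta_E\circ Lip_{0{\mathcal F}}(Z,\AE(E))$ and has finite rank. This is routine: $\psi$ has range inside a finite-dimensional subspace $V\subseteq\AE(E)$, so $\beta_E\circ\psi$ has range inside the finite-dimensional space $\beta_E(V)\subseteq E$; it is Lipschitz as the composition of a Lipschitz map with a bounded linear operator, and it vanishes at $0$ since $\psi(0)=0$ and $\beta_E$ is linear. I do not expect any real obstacle here, because the analytic weight of the argument rests entirely on Proposition \ref{aaaa}; the only points that need care are the elementary identity $\beta_E\circ\delta_E=Id_E$ and the boundedness of $\beta_E$, which are precisely what let the uniform-on-$B_Z$ approximation survive the post-composition and thereby complete Proposition \ref{aaaa} while providing the announced partial converse to Proposition \ref{bbbb}.
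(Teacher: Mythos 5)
Your proposal is correct and follows essentially the same route as the paper's own proof: invoke Proposition \ref{aaaa} to approximate $\delta_E\circ\phi$ uniformly on $B_Z$ by a finite rank Lipschitz map into \AE$(E)$, then post-compose with the barycentric map $\beta_E$ and use $\beta_E\circ\delta_E=Id_E$ together with $\|\beta_E\|\le 1$ to transfer the estimate to $E$. Your additional verification that $\beta_E\circ\psi$ is an admissible finite rank approximant is a welcome (if routine) detail that the paper leaves implicit.
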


\begin{proof}
Let   $Z$ be a pointed metric space and let $\phi:Z \to E$ be a $\mathcal I$-Lipschitz compact mapping. Fix $\varepsilon>0$. By Proposition \ref{aaaa}, there is  $f$ in $Lip_{0{\mathcal F}} (Z,E)$ such that $\|f(x)-\delta_E\circ \phi(x)\|_{\AE (E)}<\varepsilon$ for all $x\in B_Z$. Using the continuity of the linear barycentric map $\beta_E$ we get  that $\|\beta_E\circ f(x)- \phi(x)\|_{E}<\varepsilon$ for all $x\in B_Z$.
\end{proof}

If $\mathcal O_2(X,\AE(X))= \mathcal F \circ \delta_X(X, \AE(X))$ then the connection is summarized in the following proposition.

\begin{proposition}\cite[Proposition 4.11]{AcRuSaYa}
Let $\mathcal I$ be an operator ideal and let $X$  be a pointed metric space.
 If $\AE(X)$ has the ${\mathcal K}_{\mathcal I}$-uniform approximation property, then
 $X$ has  the $\mathcal I$-Lipschitz approximation property with respect to the class $\mathcal F \circ \delta_X(X,\AE(X))$.
 \end{proposition}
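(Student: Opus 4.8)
The plan is to reduce the statement to the linear characterization of the $\mathcal K_{\mathcal I}$-uniform approximation property recalled above, using that the relatively $\mathcal I$-Lipschitz compact subsets of $X$ are, by definition, exactly the sets whose image under $\delta_X$ is relatively $\mathcal I$-compact in $\AE(X)$. First I would unwind the conclusion. By definition, $X$ has the $\mathcal I$-Lipschitz approximation property with respect to $\mathcal F \circ \delta_X(X, \AE(X))$ precisely when $\delta_X$ lies in the Lipschitz-$\tau_{\mathcal I}$-closure of that class. Since this topology is generated by the seminorms $q_K$ and the relevant families of test sets are stable under finite unions, it suffices to fix $\varepsilon > 0$ and a single relatively $\mathcal I$-Lipschitz compact set $K \subseteq X$ and to produce a finite rank operator $u \in \mathcal F(\AE(X), \AE(X))$ with
$$q_K(\delta_X - u \circ \delta_X) = \sup_{x \in K} \|\delta_X(x) - u(\delta_X(x))\|_{\AE(X)} < \varepsilon,$$
noting that $u \circ \delta_X$ belongs to $\mathcal F \circ \delta_X(X, \AE(X))$.

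Next I would translate the hypothesis into approximation of the identity. Because $\AE(X)$ has the $\mathcal K_{\mathcal I}$-uniform approximation property, the characterization from \cite{LaTu} recalled above yields $Id_{\AE(X)} \in \overline{\AE(X)' \otimes \AE(X)}^{\tau_{\mathcal I}}$; that is, the identity of $\AE(X)$ is approximable by finite rank linear operators uniformly on every relatively $\mathcal I$-compact subset of $\AE(X)$.

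The bridge between the two sides is the definition of relatively $\mathcal I$-Lipschitz compactness: the set $K \subseteq X$ is relatively $\mathcal I$-Lipschitz compact exactly because $M := \delta_X(K)$ is relatively $\mathcal I$-compact in $\AE(X)$. I would therefore apply the previous approximation of $Id_{\AE(X)}$ to this very set $M$, obtaining $u \in \mathcal F(\AE(X), \AE(X))$ with $\sup_{m \in M} \|m - u(m)\|_{\AE(X)} < \varepsilon$. Writing each $m \in M$ as $m = \delta_X(x)$ for some $x \in K$, this is exactly $q_K(\delta_X - u \circ \delta_X) < \varepsilon$, completing the argument.

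The proof is in essence a dictionary translation, so I do not expect a genuine obstacle; the only point deserving care is that the two topologies are tested on matching families of sets. The Lipschitz-$\tau_{\mathcal I}$ seminorms range over relatively $\mathcal I$-Lipschitz compact sets of $X$, while the linear $\tau_{\mathcal I}$ topology on $\AE(X)$ ranges over relatively $\mathcal I$-compact sets of $\AE(X)$, and the assignment $K \mapsto \delta_X(K)$ carries the former family into the latter precisely by the definition of $\mathcal I$-Lipschitz compactness. This correspondence is the hinge on which the whole reduction turns.
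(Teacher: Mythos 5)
Your proposal is correct and is exactly the intended argument: the paper states this result only as a citation of \cite[Proposition 4.11]{AcRuSaYa}, and the proof there is the same direct translation you give, namely applying the characterization $Id_{\AE(X)}\in\overline{\AE(X)'\otimes\AE(X)}^{\tau_{\mathcal I}}$ of the ${\mathcal K}_{\mathcal I}$-uniform approximation property to the relatively $\mathcal I$-compact set $\delta_X(K)$ and reading the resulting estimate back through the seminorm $q_K$. Your side remark about reducing to a single test set is the only point needing any care, and it is handled by the standard fact that relatively $\mathcal I$-compact sets are stable under finite unions.
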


\end{document}